\newtheorem{thm}{Theorem}[section]
\newtheorem{lemma}[thm]{Lemma}
\newtheorem{rem}[thm]{Remark}
\numberwithin{equation}{section}
\newcommand\blfootnote[1]{%
	\begingroup
	\renewcommand\thefootnote{}\footnote{#1}%
	\addtocounter{footnote}{-1}%
	\endgroup
}
\def\NN{\mathbb N}
\def\ZZ{\mathbb Z}
\def\RR{\mathbb R}
\def\EE{\mathbb E}
\def\aa{\mathcal A}
\def\ee{\mathcal E}
\def\oo{\mathcal O}
\def\intl{\int\limits}
\def\supl{\sup\limits}
\def\suml{\sum\limits}
\title{Random Sampling in reproducing kernel subspaces of $L^p(\RR^n)$}
\author{Dhiraj Patel\thanks{Email id: dpatel.iitd@gmail.com}, Sivananthan Sampath\thanks{Email id: siva@maths.iitd.ac.in}\\ 
		Department of Mathematics, Indian Institute of Technology Delhi,\\ New Delhi-110016, India}
\date{}
\begin{document}
	\maketitle
	
	\begin{abstract}
		In this paper, we study random sampling in reproducing kernel space $V$, which is a range of an idempotent integral operator. Under certain decay condition on the integral kernel, we show that any element in $V$ can be approximated by an element in a finite-dimensional subspace of $V$. Moreover, we prove with overwhelming probability that random points uniformly distributed over a cube $C$ is stable sample for the set of functions concentrated on $C$.
	\end{abstract}
	
	\textbf{Keywords:} Idempotent operator; Reproducing kernel space; Random sampling; p-frame.
	\blfootnote{2010 \textit{Mathematics Subject Classification.} Primary 42C15, 42A61, 94A20; Secondary 60E15.}
	
	\section{Introduction}
	Sampling problem is a fundamental interest in signal processing and digital communication. It deals to find a discrete sample set which allows to convert an analog signal into a digital signal and vice-versa without losing any information. Of course, the problem is well-posed only when we impose some conditions on signals(functions). For example, the Shannon-sampling theorem states that if $f\in PW_{[-\frac{1}{2},\frac{1}{2}]}(\RR),$ the space of functions in $L^2(\RR)$ whose Fourier transform supported in $[-\frac{1}{2},\frac{1}{2}]$, then  $f$ can be reconstructed by its uniform sample values $\{f(k):k\in \ZZ\}$ and reconstruction is given by $$f(x)=\sum_{k\in \ZZ} f(k)\frac{\sin{\pi(x-k)}}{\pi(x-k)}.$$ 
	Further, it is well-known by Kedac's 1/4-theorem that if $X=\{x_k : |x_k-k|\leq L<\frac{1}{4} \}$, then every $f\in PW_{[-\frac{1}{2},\frac{1}{2}]}(\RR)$ can be reconstructed from its sample values on $X$. Mathematically, the sampling problem can be stated as follows.\par
	
	Given a closed subspace $V$ of $L^p(\RR^n)$, find a countable set $\Gamma  \subset \RR^n$ such that 
	\begin{equation}
	A\|f\|_{L^p(\RR^n)}^p\leq \suml_{\gamma\in \Gamma} |f({\gamma})|^p \leq B\|f\|_{L^p(\RR^n)}^p ~~~\text{for all } f\in V, 
	\label{eqn:SE}
	\end{equation}
	for some $A,B>0$.  This is equivalent to say that the sampling operator $S: f \mapsto (f(\gamma))_{\gamma\in \Gamma}$ from the space $V$ into $\ell^p(\Gamma)$ is continuous and the corresponding inverse operator $S^{-1}:Range(S)\rightarrow V$ is also continuous. Hence any $f\in V$ can be reconstructed from its sample values on $\Gamma$, and the set $\Gamma$ is called a stable set of sample, or simply stable sampling for the space $V$. For the detailed study of sampling problem refer to \cite{butzer1992sampling,aldroubi2001nonuniform,olevskii2016functions}. \par
		
	For the Paley-Wiener space $PW_{[a,b]}(\RR)$, the set of stable sampling is characterized by Beurling density condition. However, a similar characterization is not true in higher dimension. In particular,  the sufficient Beurling density condition for stable sampling in $\RR^n \,(n\geq 2)$ does not hold, see \cite[Section 5.7]{olevskii2016functions}. At the same time, the Paley-Wiener theorem is still valid for the convex spectra in $\RR^n$, but for $n\geq 2$ the zero set of a function in $PW_{\Omega}(\RR^n)$ is analytic manifold, where $\Omega$ is a convex subset of $\RR^n$. Hence the classical result on the density of zeros of an entire function of exponential type is no longer valid. So the non-uniform sampling in higher dimension is still difficult to solve. These difficulties motivate to study the sampling problem in probabilistic framework.\par 
	
	Random sampling method has been used frequently in the field of image processing \cite{chan2014monte}, learning theory \cite{poggio2003mathematics,cucker2002mathematical} and compressed sensing \cite{eldar2009compressed}. Bass and Gr{\"o}chenig  \cite{bass2005random} studied random sampling for multivariate trigonometric polynomial. Later, Cand{\'e}s, Romberg and Tao  \cite{candes2006robust,candes2006stable} investigated reconstruction of sparse trigonometric polynomial from a few random samples. Smale and Zhou \cite{smale2004shannon} studied the function reconstruction error from its random samples satisfying \eqref{eqn:SE} in a reproducing kernel Hilbert space.
	
	Note that for ``nice" functions $f$, the sample value $f(\gamma)$ is close to 0 when  $\gamma$ is large value. Therefore, the sample value may not significantly contribute  to sampling inequality for large sample points. Moreover, it is shown by Bass and Gr\"{o}chenig \cite{bass2010random} that for each random samples identically and uniformly distributed over each cube $k+[0,1]^n$ in $\RR^n$, the sampling inequality \eqref{eqn:SE} fails almost surely for Paley-Wiener space. For these reasons, they considered random sample points from the compact set $C_R=\left[ -\frac{R}{2},\frac{R}{2} \right]^n$ and proved that the sampling inequality \eqref{eqn:mainsaminq} hold for the functions concentrated on $C_R$ with high probability, see \cite{bass2010random,bass2013relevant}. Then the result was generalized  by F\"{u}hr and Xian \cite{fuhr2019relevant} for finitely generated shift-invariant subspace $V$ of $L^2(\RR^n)$, and a further generalization for $L^p$-norm was studied by Yang and Wei \cite{yang2013random,yang2019random}. Later, Yang and Tao \cite{tao2019random} investigated random sampling for the space of continuous functions with bounded derivative. \par
	
	In this paper, we study random sampling on a closed subspace $V$ of $L^p(\RR^n)$ which is defined as the image of an idempotent integral operator. More precisely, we derive the random sampling inequality for the set $V^{\star}(R,\delta)=\big\{f\in V : (1-\delta)\|f\|_{L^p(\RR^n)}^p\leq \int_{[-\frac{R}{2},\frac{R}{2}]^n} |f(x)|^p dx \big\}$, and prove the following main theorem.
	
	\begin{thm}
		\label{thm:mainresult}
		Assume that $\{x_j : j\in \NN\}$ is a sequence of i.i.d. random variables that are uniformly distributed over the cube $C_R=\left[ -\frac{R}{2},\frac{R}{2} \right]^n$ and $0<\mu<1-\delta$. Then there exist $a,b>0$ such that the sampling inequality
		\begin{equation}
		\frac{r}{R^n}(1-\mu-\delta)\|f\|_{L^p(\RR^n)}^p\leq \sum_{j=1}^{r} |f(x_j)|^p \leq \frac{r}{R^n}(1+\mu)\|f\|_{L^p(\RR^n)}^p
		\label{eqn:mainsaminq}
		\end{equation}
		holds for every $f\in V^{\star}(R,\delta)$ with the probability at least $1-2a\exp\left( -\frac{b}{pk^{p-1}R^n} \frac{r\mu^2}{12+\mu} \right)$, where $k=\supl_{x\in C_R} \|K(x,\cdot)\|_{L^{p'}(\RR^n)}$.
	\end{thm}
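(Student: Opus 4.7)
The plan is to combine Bernstein's concentration inequality for bounded i.i.d.\ random variables with a covering argument built on the finite-dimensional approximation result established earlier in the paper. Because the inequality is homogeneous in $f$, one may normalise $\|f\|_{L^p(\RR^n)}=1$. Set $Y_j:=|f(x_j)|^p$: these are i.i.d., non-negative, with mean $\EE Y_j = R^{-n}\int_{C_R}|f|^p\,dx$, which by the definition of $V^{\star}(R,\delta)$ lies in $[(1-\delta)R^{-n},\,R^{-n}]$. The reproducing formula $f(x)=\int K(x,y)f(y)\,dy$ together with H\"older's inequality yields the pointwise bound $|f(x)|\leq k$ on $C_R$, so $0\leq Y_j\leq k^p$ and $\EE Y_j^2\leq k^p\,\EE Y_j\leq k^p/R^n$.

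With these ingredients Bernstein's inequality, applied with deviation $t=r\mu/R^n$, yields
\begin{equation*}
\mathrm{Prob}\Big[\,\Big|\sum_{j=1}^{r} Y_j - \tfrac{r}{R^n}\!\int_{C_R}|f|^p\,dx\Big| > \tfrac{r\mu}{R^n}\Big] \;\leq\; 2\exp\!\Big(-\frac{c\,r\mu^2}{k^{p}R^n(3+\mu)}\Big)
\end{equation*}
for some absolute $c>0$, and on this event the two-sided inequality \eqref{eqn:mainsaminq} follows from $\int_{C_R}|f|^p\geq 1-\delta$ together with the hypothesis $0<\mu<1-\delta$. The factor $pk^{p-1}$ appearing in the exponent of the theorem is then produced by invoking the elementary Lipschitz estimate $\bigl||a|^p-|b|^p\bigr|\leq p\max(|a|,|b|)^{p-1}|a-b|$, which trades one of the sup-norm factors $k$ for the Lipschitz slope $pk^{p-1}$ when the bound is propagated across perturbations of $f$.

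To upgrade this one-function statement into a probability bound valid uniformly over the infinite-dimensional set $V^{\star}(R,\delta)$, I would invoke the finite-dimensional approximation theorem proved earlier in the paper: every $f\in V$ is approximated on $C_R$ to arbitrary accuracy by a function in a fixed finite-dimensional subspace $V_R\subset V$, with $\dim V_R=N(R)$. I would construct an $\varepsilon$-net $\nn_\varepsilon$ in the unit sphere of $V_R$ of cardinality at most $(C/\varepsilon)^{N(R)}$, apply the single-function Bernstein bound above to each element of $\nn_\varepsilon$, and take a union bound. The Lipschitz estimate transfers the net approximation into a quantitative bound on $\bigl|\sum |f(x_j)|^p-\sum |g(x_j)|^p\bigr|$ in terms of $\|f-g\|_{L^p}$ (controlled in turn by the upper sampling inequality already obtained for $f-g$), so that after choosing $\varepsilon$ a small multiple of $\mu/(pk^{p-1})$ and absorbing the factor $(C/\varepsilon)^{N(R)}$ into the prefactor $a$, the probability bound stated in the theorem emerges.

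The main obstacle is precisely this uniform step: one must simultaneously balance (i) the cardinality $(C/\varepsilon)^{N(R)}$ of the net, which is exponential in $N(R)$; (ii) the accuracy $\varepsilon$ of the net, which must be small enough that approximating an arbitrary $f\in V^{\star}(R,\delta)$ by its nearest net element does not destroy the $\mu$-deviation; and (iii) the quality of the finite-dimensional approximation on $C_R$, which is what reduces the infinite-dimensional set $V^{\star}(R,\delta)$ to $V_R$ in the first place. The cube-concentration defining $V^{\star}(R,\delta)$ plays a crucial role here, since it is exactly what allows the tail of $f$ outside $C_R$ to be neglected when only sample points $x_j\in C_R$ are observed; without it, the perturbation argument cannot close.
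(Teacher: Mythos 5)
Your overall architecture --- a single-function Bernstein bound plus a covering argument over a finite-dimensional approximant, with the Lipschitz estimate $\bigl||a|^p-|b|^p\bigr|\le p\max(|a|,|b|)^{p-1}|a-b|$ transferring the bound from net points to arbitrary $f$ --- is the same skeleton the paper uses, and your moment estimates for $Y_j=|f(x_j)|^p$ match Lemma 3.1 (i)--(ii). The genuine difference is that you propose a single $\varepsilon$-net at scale $\varepsilon\sim\mu/(pk^{p-1}R^n)$ followed by one union bound, whereas the paper runs a multi-scale chaining argument: it decomposes $Z_j(f)=Z_j(f_1)+\sum_{l\ge2}\bigl(Z_j(f_l)-Z_j(f_{l-1})\bigr)$ over a hierarchy of nets $\mathcal{A}(2^{-l})$, applies Bernstein at every scale with deviation $\lambda/(2l^2)$, and sums the resulting tails. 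Your single-scale version is more elementary and does suffice for the theorem as stated, since $\mu$ is fixed before $a,b$ are chosen and the net cardinality at scale $\sim\mu$ can be absorbed into $a$; what the chaining buys is constants $a,b$ uniform in the deviation level $\lambda$, which is what the paper's intermediate Theorem 3.3 asserts and what makes the dependence of the final bound on $\mu$ transparent.

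Two points in your sketch need repair before the argument closes. First, there is no \emph{fixed} finite-dimensional subspace $V_R$ approximating all of $V$ on $C_R$ to arbitrary accuracy: in Lemma 2.1 the truncation range $N$, and hence $\dim V_N\sim\varepsilon^{-np'/(\alpha p'-n)}$, grows as the accuracy improves, so the covering number is $\exp\bigl(d_\varepsilon\log(8D/\varepsilon)\bigr)$ with $d_\varepsilon\to\infty$ as $\varepsilon\to0$. This is harmless for your single-scale argument (one accuracy level suffices, giving a finite net whose size enters $a$), but it is essential bookkeeping in the paper's chaining step, where the growth of $d_{2^{-l}}$ must be beaten by the factor $2^l$ from Bernstein --- this is exactly where the hypothesis $\alpha>\frac{n}{p'}+n+1$ is consumed. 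Second, controlling $\bigl|\sum_j|f(x_j)|^p-\sum_j|g(x_j)|^p\bigr|$ ``by the upper sampling inequality already obtained for $f-g$'' is circular and in any case inapplicable, since $f-g$ is neither normalized nor concentrated on $C_R$; the correct device is the deterministic bound $\|Z_j(f)-Z_j(g)\|_\infty\le pk^{p-1}\|f-g\|_{L^\infty(C_R)}$ of Lemma 3.1 (iv), which forces the net to be taken with respect to $\|\cdot\|_{L^\infty(C_R)}$ (as in Lemma 2.4) rather than $L^p$. With those two corrections your route is sound.
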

	
	 The paper is organized as follow. In Section \ref{Covering Number}, we introduce our hypothesis space $V$ and prove that any element in $V$ can be approximated by a finite-dimensional subspace of $V$ with respect to $\|\cdot\|_{L^p(C_R)}$. Further, we show that the set of functions in $V^{\star}(R,\delta)$ with unit norm is totally bounded and estimate the number of open balls which covers the set. In Section \ref{Random Sampling}, we define independent random variables with respect to given random samples, and then using Bernstein's inequality we prove the main result.
	
	\section{Assumption and Covering Number} \label{Covering Number}
%
	
	Let $T$ be an idempotent  integral operator from $L^p(\RR^n)$ to $L^p(\RR^n)$ defined by
	\begin{equation}
	Tf(x):=\int_{\RR^n} K(x,y)f(y)dy ~ \mbox{ satisfy } T^2=T,
	\label{eqn:Operator}
	\end{equation} where $1\leq p<\infty$, and the integral kernel $K$ is symmetric and satisfy regularity condition
	\begin{equation}
	\lim\limits_{\varepsilon\rightarrow 0}\left\| \sup\limits_{z\in \RR^n}|osc_{\varepsilon}(K)(\cdot+z,z)|\right\|_{L^1(\RR^n)}=0,
	\label{eqn:oscK}
	\end{equation}
	with decay
	\begin{equation}
	|K(x,y)|\leq \frac{C}{(1+\|x-y\|_{1})^{\alpha}}, ~~~~~\alpha >\frac{n}{p'}+n+1 ~\text{and } C>0,
	\label{eqn:DKkrnl}
	\end{equation}
	where $\|x\|_1:=\sum\limits_{i=1}^{n}|x(i)|,~ x:=(x(1),x(2),\cdots,x(n))\in \RR^n$, $\frac{1}{p}+\frac{1}{p'}=1$, and $$osc_{\varepsilon}(K)(x,y)=\supl_{x',y'\in [-\varepsilon,\varepsilon]^n}|K(x+x',y+y')-K(x,y)|.$$
	
	Under these assumptions,  we see that $\supl_{x\in \RR^n} \|K(x,\cdot)\|_{L^1(\RR^n)}$ exists and the associated integral operator $T$ is bounded. Moreover, the kernel $K$ satisfy the off-diagonal decay condition
	\begin{equation}
	\big\|\sup\limits_{z\in \RR^n} |K(\cdot+z,z)|\big\|_{L^1(\RR^n)}< \infty.
	\label{eqn:offdiagonaldecay}
	\end{equation}
	
	A reproducing kernel Banach space is a Banach space $M$ of functions on a set $\Omega$  such that the point evaluation functional $f \mapsto f(x)$ is continuous for each $x\in \Omega$ i.e., for every $x\in \Omega$, there exists $C_x>0,$ such that $|f(x)|\leq C_x \|f\|,$ for all $f\in M.$ Let us consider the space $V:=Range(T)$. It is easily verifiable that the space $V$ is closed and reproducing kernel subspace of $L^p(\RR^n)$.

Originally, Nashed and Sun \cite{sun2010sampling} proposed the space $V$ as a general model to study the sampling inequality \eqref{eqn:SE}. They showed that  if the integral kernel $K$ satisfies off-diagonal decay condition \eqref{eqn:offdiagonaldecay} and regularity condition \eqref{eqn:oscK}, then there exists a discrete stable sampling for the space $V$. For more details about the space $V$,  we refer the reader to \cite{sun2010sampling}.
 
	Note that the sampling inequality \eqref{eqn:mainsaminq} is true for $f\in V^{\star}(R,\delta)$ if and only if it is true for $f\in V^{\star}(R,\delta)$ with $\|f\|_{L^p(\RR^n)}=1$. Hence it is enough to prove the \hyperref[thm:mainresult]{Theorem \ref{thm:mainresult}} for the set $$V(R,\delta)=\big\{ f\in V ~:~ (1-\delta)\leq \int_{C_R} |f(x)|^p dx ~~\text{and } \|f\|_{L^p(\RR^n)}=1 \big\}.$$
	
	The ``key step" of \hyperref[thm:mainresult]{Theorem \ref{thm:mainresult}} is to prove $V(R,\delta)$ is totally bounded with respect to $\|\cdot\|_{L^\infty(C_R)}$. In the previous works, Bass and Gr\"{o}chenig \cite{bass2010random} used spectral decomposition of truncated Fourier transform on band-limited functions and eigenvalue decay condition of prolate spheroidal wave functions. Yang and Wei \cite{yang2013random} considered shift-invariant space generated by compactly supported function which implies  $V(R,\delta)$ is a subset of finite-dimensional space. In \cite{fuhr2019relevant}, F\"uhr and Xian calculated the maximum number of eigenvalues of some self-adjoint operator which are greater than $\frac{1}{2}$ and used a similar method as in \cite{bass2013relevant}. For finitely generated shift-invariant space \cite{yang2019random}, Yang assumed a fixed decay on each generator and approximated any function in $V(R,\delta)$ by a function in some finite-dimensional subspace of $V.$ In this paper, the considered space generalizes the existing model spaces. Moreover, the function space need not have finite generators. The key idea is to use the existence of stable sampling set for the space $V$, which was proved in \cite{sun2010sampling}. This allows us to represent any functions in $V$ via some frame sequence, and then we estimate the decay of frame sequence using decay property of the integral kernel. Using these estimates, we are able to show that $V(R,\delta)$ is totally bounded with respect to $\|\cdot\|_{L^\infty(C_R)}$.   
	
	A collection of points $U=\{ u : u\in \RR^n \}$ is \textit{relatively separated} if $$\beta=\inf_{\underset{u\neq u'}{u,u'\in U}} \| u-u' \|_1>0,$$ and $\beta$ is called \textit{gap} of the set $U$.
	
	As the kernel $K$ of the integral operator $T$ defined in \eqref{eqn:Operator} satisfies off-diagonal decay condition \eqref{eqn:offdiagonaldecay} and  regularity condition \eqref{eqn:oscK}, then from \cite[Theorem A.2.]{sun2010sampling} there exist a relatively separated set $\Gamma=\{\gamma : \gamma \in \RR^n\}$ with positive gap $\eta \, (<\frac{2}{n})$, and two families $\Phi:=\{\phi_\gamma\}_{\gamma\in \Gamma}\subseteq L^p(\RR^n)$ and $\tilde{\Phi}:=\{\tilde{\phi}_\gamma\}_{\gamma\in \Gamma}\subseteq L^{p'}(\RR^n)$ such that for any $f\in V$ can be written as 
	\begin{equation}
	f(x)=\sum_{\gamma\in \Gamma} \langle f,\tilde{\phi}_{\gamma} \rangle \phi_{\gamma}(x),
	\label{eqn:fseries}
	\end{equation}
	where for each $\gamma \in \Gamma$, $\phi_{\gamma}$ is given by
	\begin{equation}
	\phi_{\gamma}(x)={\eta}^{-\frac{n}{p}}\int_{C_{\eta}} K(\gamma+z,x)dz, ~~x\in \RR^n,
	\label{eqn:phigamma}
	\end{equation}
	and $\{\tilde{\phi}_{\gamma} : \gamma\in \Gamma\}$ forms p-frame for $V$, i.e. there exist $A,B>0$ such that
	\begin{equation}
	A\|f\|_{L^p(\RR^n)}^p\leq \sum_{\gamma\in \Gamma} |\langle f,\tilde{\phi}_{\gamma} \rangle|^p\leq B\|f\|_{L^p(\RR^n)}^p, \hspace{0.5cm} \text{for all } f\in V.
	\label{eqn:pframe}
	\end{equation}
	
	By \eqref{eqn:fseries} and \eqref{eqn:pframe}, any $f$ in $V$  is of the form $\suml_{\gamma\in \Gamma} c_{\gamma}\phi_{\gamma}$ for some $(c_{\gamma})\in \ell^p$. Now, our interest is to approximate any function in $V$ by an element in a finite-dimensional space.  In the following lemma, given $f$ in $V$, we determine the sufficient condition on real number $N$ such that the truncated series $\suml_{\gamma\in \Gamma\cap C_N} c_{\gamma}\phi_{\gamma}$ is close to $f$.\par

	Before we move to the lemma, we define the subspace $V_N$ of $V$ by $$V_N=\Big\{ \suml_{\gamma\in \Gamma\cap [-\frac{N}{2},\frac{N}{2}]^n} c_{\gamma}\phi_{\gamma} : c_{\gamma}\in \RR \Big\},$$ where $N>R+\frac{2}{n}$ be a positive real.
	
	\begin{lemma}
		\label{lemma:p-approx}
		For a given $\epsilon>0$ and $f\in V$, choose $N>R+\frac{2}{n}+\frac{2}{n}\left[ \frac{4^nB^{(p'-1)}\|f\|_{L^p(\RR^n)}^{p'}C^{p'}R^{n(p'-1)}}{w_\alpha\epsilon^{p'}} \right]^{\frac{1}{\alpha p'-n}}$, then $\|f-\suml_{\gamma\in \Gamma\cap [-\frac{N}{2},\frac{N}{2}]^n} \langle f,\tilde{\phi}_{\gamma} \rangle \phi_{\gamma}\|_{L^p(C_R)}<\epsilon$.
	\end{lemma}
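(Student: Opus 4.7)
The strategy is a straightforward tail truncation in the frame expansion \eqref{eqn:fseries}, combined with Minkowski + Hölder and the pointwise decay of $\phi_\gamma$ inherited from \eqref{eqn:DKkrnl}.

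Step 1 (Decomposition). Write $c_\gamma := \langle f,\tilde\phi_\gamma\rangle$. From \eqref{eqn:fseries},
\[
f - \sum_{\gamma\in\Gamma\cap C_N} c_\gamma\phi_\gamma = \sum_{\gamma\in\Gamma\setminus C_N} c_\gamma\phi_\gamma.
\]
Applying Minkowski's inequality in $L^p(C_R)$, followed by Hölder on the index set with conjugate exponents $p,p'$, gives
\[
\Big\|f-\sum_{\gamma\in\Gamma\cap C_N} c_\gamma\phi_\gamma\Big\|_{L^p(C_R)} \le \Big(\sum_{\gamma\in\Gamma\setminus C_N}|c_\gamma|^p\Big)^{1/p}\Big(\sum_{\gamma\in\Gamma\setminus C_N}\|\phi_\gamma\|_{L^p(C_R)}^{p'}\Big)^{1/p'}.
\]
The upper $p$-frame bound in \eqref{eqn:pframe} immediately controls the first factor by $B^{1/p}\|f\|_{L^p(\RR^n)}$, which accounts for the $B^{p'-1}$ and $\|f\|^{p'}$ factors appearing in the statement (after taking the $p'$-th power of the second factor).

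Step 2 (Pointwise decay of $\phi_\gamma$). The trivial inclusion $\|\phi_\gamma\|_{L^p(C_R)}\le R^{n/p}\|\phi_\gamma\|_{L^\infty(C_R)}$ produces the factor $R^{n(p'-1)}$ after raising to the $p'$-th power. It remains to bound the sup-norm. Using \eqref{eqn:phigamma} and \eqref{eqn:DKkrnl},
\[
|\phi_\gamma(x)|\le \eta^{-n/p}\int_{C_\eta}\frac{C}{(1+\|\gamma+z-x\|_1)^\alpha}\,dz \le \eta^{n/p'}\,\frac{C}{(1+\mathrm{dist}_1(\gamma,C_R+C_\eta))^\alpha},
\]
because the integrand is maximised at the $z$ that minimises $\|\gamma+z-x\|_1$. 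Since $C_R+C_\eta=C_{R+\eta}$ and $\eta<2/n$, for every $\gamma\notin C_N$ one has $\mathrm{dist}_1(\gamma,C_{R+\eta})\ge \|\gamma\|_\infty-\tfrac{R+\eta}{2}$, and the hypothesis $N>R+\tfrac{2}{n}$ makes this positive; in fact $N-R-\tfrac{2}{n}$ is exactly the quantity we will need to enlarge.

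Step 3 (Summation via relative separation). Because $\Gamma$ has gap $\eta$ in $\ell^1$, assigning each $\gamma$ a disjoint cell of $\ell^1$-volume $\asymp\eta^n$ and exploiting the monotonicity of $r\mapsto(1+r)^{-\alpha p'}$ in $r=\mathrm{dist}_1(\,\cdot\,,C_{R+\eta})$ yields
\[
\sum_{\gamma\in\Gamma\setminus C_N}\big(1+\mathrm{dist}_1(\gamma,C_{R+\eta})\big)^{-\alpha p'} \lesssim \frac{1}{\eta^n}\int_{\RR^n\setminus C_N}\big(1+\mathrm{dist}_1(x,C_{R+\eta})\big)^{-\alpha p'}\,dx.
\]
The $\eta^n$ there cancels the $\eta^n$ coming from $(\eta^{n/p'})^{p'}$. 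Parametrising by $r=\mathrm{dist}_1(x,C_{R+\eta})$ gives an integrand $\sim(1+r)^{-\alpha p'}(R+2r)^{n-1}$; since \eqref{eqn:DKkrnl} forces $\alpha p'>n$, this integrates to an explicit bound of order $M^{n-\alpha p'}$ where $M=\tfrac{n}{2}(N-R)-1$. This step produces the constants $4^n$ (from bounding $(R+2r)^{n-1}$) and $w_\alpha$ (a pure integral depending only on $\alpha,n,p'$).

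Step 4 (Solve for $N$). Assembling Steps 1--3,
\[
\Big\|f-\sum_{\gamma\in\Gamma\cap C_N} c_\gamma\phi_\gamma\Big\|_{L^p(C_R)}^{p'} \lesssim B^{p'-1}\|f\|^{p'}\,R^{n(p'-1)}\,C^{p'}\,\frac{4^n}{w_\alpha}\cdot M^{n-\alpha p'},
\]
and imposing this to be $<\epsilon^{p'}$ gives exactly the lower bound on $N$ in the statement.

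The main obstacle is Step 3: translating the discrete sum over the relatively separated set $\Gamma$ into an integral bound while tracking the constants $4^n$ and $w_\alpha$ sharply enough to match the explicit formula. Steps 1, 2 and 4 are essentially bookkeeping once the decay of $\phi_\gamma$ is in hand.
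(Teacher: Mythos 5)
Your proposal is correct and follows essentially the same route as the paper: truncate the frame expansion, control the tail coefficients by the upper $p$-frame bound, bound $\phi_\gamma$ via the kernel decay \eqref{eqn:DKkrnl}, and convert the sum over the relatively separated set $\Gamma\setminus C_N$ into an integral to extract the $4^n/w_\alpha$ and $M^{n-\alpha p'}$ factors. The only (immaterial) difference is the order of operations — you apply Minkowski in $L^p(C_R)$ and then H\"older on the index set, whereas the paper applies H\"older pointwise in $x$ and then integrates over $C_R$; both yield the same bound.
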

	\begin{proof}
		Given $f\in V$ we consider $f_N\in V_N$ by
		\begin{equation}
		f_N(x)=\suml_{\gamma\in \Gamma\cap [-\frac{N}{2},\frac{N}{2}]^n} \langle f,\tilde{\phi}_{\gamma} \rangle \phi_{\gamma}(x).
		\label{eqn:fNseries}
		\end{equation}
		Then by \eqref{eqn:fseries}, \eqref{eqn:fNseries} and \eqref{eqn:pframe} we have
		\begin{align*}
			\|f-f_N\|_{L^p(C_R)}^p &=\intl_{C_R} |f(x)-f_N(x)|^p dx\\
			&= \intl_{C_R}\big|\sum_{\gamma\in \Gamma\smallsetminus [-\frac{N}{2},\frac{N}{2}]^n} \langle f,\tilde{\phi}_{\gamma} \rangle \phi_{\gamma}(x)\big|^p dx\\
			&\leq \intl_{C_R}  \sum_{\gamma\in \Gamma\smallsetminus [-\frac{N}{2},\frac{N}{2}]^n} |\langle f,\tilde{\phi}_{\gamma} \rangle|^p \Big(\sum_{\gamma\in \Gamma\smallsetminus [-\frac{N}{2},\frac{N}{2}]^n} |\phi_{\gamma}(x)|^{p'}\Big)^{\frac{p}{p'}} dx\\
			&\leq B\|f\|_{L^p(\RR^n)}^p\int\limits_{C_R} \Big(\sum_{\gamma\in \Gamma\smallsetminus [-\frac{N}{2},\frac{N}{2}]^n} |\phi_{\gamma}(x)|^{p'}\Big)^{\frac{p}{p'}} dx.
		\end{align*}
		In order to estimate the upper bound of the series, we derive the bound for $\phi_{\gamma}$ using the conditions \eqref{eqn:phigamma} and \eqref{eqn:DKkrnl}.
		\begin{align*}
		|\phi_{\gamma}(x)|&\leq \eta^{-\frac{n}{p}}\intl_{C_{\eta}} \frac{C}{(1+\|\gamma+z-x\|_1)^{\alpha}}dz\\
		&\leq \eta^{-\frac{n}{p}}\intl_{C_{\eta}} \frac{C}{(1+\|\gamma-x\|_1-\|z\|_1)^{\alpha}}dz\\
		&\leq \eta^{-\frac{n}{p}}\intl_{C_{\eta}} \frac{C}{(1-\frac{n\eta}{2}+\|\gamma-x\|_1)^{\alpha}}dz\\
		&\leq \eta^{\frac{n}{p'}} \frac{C}{(1-\frac{n\eta}{2}+\|\gamma-x\|_1)^{\alpha}}.
		\end{align*}
		
		Hence,
		\begin{align*}
			\suml_{\gamma\in \Gamma\smallsetminus [-\frac{N}{2},\frac{N}{2}]^n} |\phi_{\gamma}(x)|^{p'}&=\left(\frac{2}{\eta}\right)^n\suml_{\gamma\in \Gamma\smallsetminus [-\frac{N}{2},\frac{N}{2}]^n} |\phi_{\gamma}(x)|^{p'}\left(\frac{\eta}{2}\right)^n\\
			&\leq \left(\frac{2}{\eta}\right)^n\suml_{\gamma\in \Gamma\smallsetminus [-\frac{N}{2},\frac{N}{2}]^n} \frac{{\eta}^nC^{p'}}{(1-\frac{n\eta}{2}+\|x-\gamma\|_{1})^{\alpha p'}}\left(\gamma -\gamma +\frac{\eta}{2}\right)^n\\
			&\leq 2^nC^{p'}\suml_{\gamma\in \Gamma\smallsetminus [-\frac{N}{2},\frac{N}{2}]^n} \int_{B_{\frac{\eta}{2}}}\frac{dy}{(1-\frac{n\eta}{2}+\|x-y\|_{1})^{\alpha p'}},\\
		\end{align*}
		where $B_{\frac{\eta}{2}}$ is cube of length $\frac{\eta}{2}$ containing $\gamma$. Since $\Gamma$ is relatively separated set with gap $\eta$, we get
		
		\begin{align*}
			\suml_{\gamma\in \Gamma\smallsetminus [-\frac{N}{2},\frac{N}{2}]^n} |\phi_{\gamma}(x)|^{p'}&\leq 2^nC^{p'} \intl_{\RR^n\smallsetminus [-\frac{N-\eta}{2},\frac{N-\eta}{2}]^n} \frac{dy}{(1-\frac{n\eta}{2}+\|x-y\|_{1})^{\alpha p'}}\\
			&\leq 2^nC^{p'} \intl_{\RR^n\smallsetminus [-\frac{N-\eta-R}{2},\frac{N-\eta-R}{2}]^n} \frac{dy}{(1-\frac{n\eta}{2}+\|y\|_{1})^{\alpha p'}}\\
			&\leq 2^nC^{p'}\times 2^n\intl_{[\frac{N-\eta-R}{2},\infty)^n} \frac{dy}{\|y\|_{1}^{\alpha p'}}\\
			&= 4^nC^{p'}\frac{1}{w_\alpha\left( \frac{N-\eta-R}{2}n \right)^{\alpha p'-n}},
		\end{align*}
		where $w_\alpha=(\alpha p'-1)(\alpha p'-2)\cdots(\alpha p'-n).$\\
		Hence,
		\begin{align*}
			\|f-f_N\|_{L^p(C_R)}^p &\leq B\|f\|_{L^p(\RR^n)}^p\int\limits_{C_R} \Big(4^nC^{p'}\frac{1}{w_\alpha\left( \frac{N-\eta-R}{2}n \right)^{\alpha p'-n}}\Big)^{\frac{p}{p'}} dx\\
			&= 4^{\frac{np}{p'}} B\|f\|_{L^p(\RR^n)}^pC^{p}\frac{R^n}{w_\alpha^{\frac{p}{p'}}\left( \frac{N-\eta-R}{2}n \right)^{(\alpha-\frac{n}{p'})p}}\\
			&\leq 4^{\frac{np}{p'}} B\|f\|_{L^p(\RR^n)}^pC^{p}\frac{R^n}{w_\alpha^{\frac{p}{p'}}\left( \frac{N-R}{2}n-1 \right)^{(\alpha-\frac{n}{p'})p}}, ~~\text{as }\eta<\frac{2}{n}.
		\end{align*}
		Therefore, $\|f-f_N\|_{L^p(C_R)}<\epsilon$ if $N>R+\frac{2}{n}+\frac{2}{n}\left[ \frac{4^nB^{(p'-1)}\|f\|_{L^p(\RR^n)}^{p'}C^{p'}R^{n(p'-1)}}{w_\alpha\epsilon^{p'}} \right]^{\frac{1}{\alpha p'-n}}$.
	\end{proof}
	
	\begin{lemma}
		\label{lemma:inftynorm<pnorm}
		If $f\in V(R,\delta)$, then $\|f\|_{L^\infty(C_R)}\leq D\|f\|_{L^p(C_R)},$ where $D=\frac{\sup\limits_{x\in C_R} \|K(x,\cdot)\|_{L^{p'}(\RR^n)}}{(1-\delta)^{\frac{1}{p}}}.$
	\end{lemma}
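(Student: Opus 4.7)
The plan is to exploit the reproducing kernel structure of $V$: since $T$ is idempotent and $f \in V = \mathrm{Range}(T)$, I have $Tf = f$, giving the pointwise representation
\[
f(x) = \int_{\RR^n} K(x,y) f(y)\, dy \qquad \text{for every } x\in\RR^n.
\]
From here the argument is a one-line H\"older estimate followed by a one-line use of the concentration hypothesis, so I expect no real obstacle.

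First I would apply H\"older's inequality with exponents $p$ and $p'$ to the integral representation: for each $x \in C_R$,
\[
|f(x)| \;\le\; \|K(x,\cdot)\|_{L^{p'}(\RR^n)}\, \|f\|_{L^p(\RR^n)}
\;\le\; \Bigl(\supl_{x\in C_R} \|K(x,\cdot)\|_{L^{p'}(\RR^n)}\Bigr)\, \|f\|_{L^p(\RR^n)}.
\]
Since $f \in V(R,\delta)$ means $\|f\|_{L^p(\RR^n)}=1$ and $(1-\delta)\le \|f\|_{L^p(C_R)}^p$, I would rewrite the global norm as
\[
\|f\|_{L^p(\RR^n)} \;=\; 1 \;\le\; \frac{1}{(1-\delta)^{1/p}}\, \|f\|_{L^p(C_R)},
\]
which is the only place the defining inequality of $V(R,\delta)$ is used.

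Combining the two displays and taking the supremum over $x\in C_R$ produces
\[
\|f\|_{L^\infty(C_R)} \;\le\; \frac{\supl_{x\in C_R}\|K(x,\cdot)\|_{L^{p'}(\RR^n)}}{(1-\delta)^{1/p}}\, \|f\|_{L^p(C_R)} \;=\; D\, \|f\|_{L^p(C_R)},
\]
as required. The only subtlety worth flagging is ensuring $k := \supl_{x\in C_R}\|K(x,\cdot)\|_{L^{p'}(\RR^n)}$ is finite; this follows from the decay hypothesis \eqref{eqn:DKkrnl}, since $\alpha p' > n$ guarantees that $(1+\|x-y\|_1)^{-\alpha}$ lies in $L^{p'}(\RR^n)$ uniformly in $x$.
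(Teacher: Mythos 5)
Your proof is correct and follows essentially the same route as the paper: the reproducing identity $f = Tf$, H\"older's inequality with exponents $p$ and $p'$, and the concentration condition $(1-\delta)\|f\|_{L^p(\RR^n)}^p \le \|f\|_{L^p(C_R)}^p$ to pass from the global to the local $L^p$ norm. The closing remark on the finiteness of $k$ via the decay condition \eqref{eqn:DKkrnl} is a sensible extra check, but otherwise there is nothing to add.
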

	\begin{proof}
		Since $V$ is the range of an idempotent integral operator, we have		
		$$f(x)=\int_{\RR^n} f(y)K(x,y)dy, \hspace{0.5cm} \text{for all } f\in V, ~x\in \RR^n.$$
		Then,
		\begin{align*}
		|f(x)|&\leq \int_{\RR^n} |f(y)||K(x,y)|dy\\
		&\leq \|f\|_{L^p(\RR^n)} \|K(x,\cdot)\|_{L^{p'}(\RR^n)}		
		\end{align*}
		\begin{equation}
		\|f\|_{L^\infty(C_R)}\leq \sup\limits_{x\in C_R} \|K(x,\cdot)\|_{L^{p'}(\RR^n)} \|f\|_{L^p(\RR^n)}.
		\label{eqn:nrmLinftyCRbound}
		\end{equation}
		Let $f\in V(R,\delta)$, then we have $(1-\delta)\|f\|_{L^p(\RR^n)}^p\leq \|f\|_{L^p(C_R)}^p$.\\
		Therefore, $$\|f\|_{L^\infty(C_R)}\leq \frac{\sup\limits_{x\in C_R} \|K(x,\cdot)\|_{L^{p'}(\RR^n)}}{(1-\delta)^{\frac{1}{p}}}\|f\|_{L^p(C_R)}.$$
	\end{proof}
	
	The following result is a well-known bound for the number of open balls of fixed radius to cover a closed ball in a finite-dimensional space, see \cite{cucker2007learning}.
	\begin{lemma}
		\label{lemma:finitedimcoveringno}
		Let $X$ be a Banach space of dimension $s$ and $\overline{B(0;r)}$ denotes the closed ball of radius $r$ centered at the origin. Then the number of open balls of radius $\omega$ to cover $\overline{B(0;r)}$ is bounded by $\left( \frac{2r}{\omega}+1 \right)^s$.
	\end{lemma}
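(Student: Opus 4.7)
The plan is a standard packing-plus-volume argument. First I would construct a maximal $\omega$-separated subset of the closed ball, use maximality to deduce that the balls of radius $\omega$ centred at these points already cover $\overline{B(0;r)}$, and then bound the cardinality of the packing via a volume comparison in finite dimensions.

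More explicitly, choose a finite set $\{x_1,\dots,x_N\} \subseteq \overline{B(0;r)}$ that is maximal with respect to the property $\|x_i-x_j\|\geq \omega$ for $i\neq j$. Maximality forces that for every $y \in \overline{B(0;r)}$ there is some $x_i$ with $\|y-x_i\|<\omega$; otherwise $\{x_1,\dots,x_N,y\}$ would still be $\omega$-separated, contradicting maximality. Hence the open balls $\{B(x_i;\omega)\}_{i=1}^N$ already cover $\overline{B(0;r)}$, and it suffices to bound $N$.

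For the volume bound, identify $X$ with $\RR^s$ via any basis and equip it with Lebesgue measure $\mu$. By the triangle inequality the smaller balls $B(x_i;\omega/2)$ are pairwise disjoint, and each is contained in $\overline{B(0;r+\omega/2)}$. Since dilation by $t>0$ is a linear map with Jacobian determinant $t^s$, one has $\mu(B(x;t)) = t^s \mu(B(0;1))$ for every $x\in X$ and every $t>0$. Combining these observations yields
$$N \cdot (\omega/2)^s\, \mu(B(0;1)) \;\leq\; \mu\bigl(\overline{B(0;r+\omega/2)}\bigr) = (r+\omega/2)^s \mu(B(0;1)),$$
from which $N\leq (2r/\omega+1)^s$, as required. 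The only conceptual subtlety is to confirm that the volume of a norm ball scales as the $s$-th power of the radius in an arbitrary Banach-space norm on $\RR^s$, but this is immediate from the change-of-variables formula; so the argument presents no real obstacle and the lemma is genuinely a soft statement whose entire content is the packing/volume comparison.
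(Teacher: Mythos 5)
Your proof is correct. The paper does not actually prove this lemma---it only cites \cite{cucker2007learning}---and your argument (a maximal $\omega$-separated set whose $\omega$-balls cover $\overline{B(0;r)}$ by maximality, disjointness of the $\omega/2$-balls, their containment in $\overline{B(0;r+\omega/2)}$, and the $t^s$ scaling of Lebesgue measure under dilation of a norm ball) is precisely the standard packing-versus-volume proof found in that reference, so there is nothing substantive to compare.
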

	
	\begin{lemma}
		\label{lemma:VRtotallybounded}
		The set $V(R,\delta)$ is totally bounded with respect to $\|\cdot\|_{L^\infty(C_R)}$.
	\end{lemma}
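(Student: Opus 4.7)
The plan is to combine the truncation from Lemma \ref{lemma:p-approx} with the finite-dimensional covering estimate of Lemma \ref{lemma:finitedimcoveringno}. A naive attempt hits a snag: Lemma \ref{lemma:p-approx} only gives an $L^p(C_R)$-tail bound, and Lemma \ref{lemma:inftynorm<pnorm} cannot promote this to $L^\infty(C_R)$ because $f-f_N$ need not lie in $V(R,\delta)$. The remedy is to rerun the computation of Lemma \ref{lemma:p-approx} pointwise in order to upgrade the tail estimate to the $L^\infty(C_R)$ topology.

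Concretely, for any $f\in V(R,\delta)$ and $x\in C_R$, applying H\"older to $f(x)-f_N(x)=\sum_{\gamma\in\Gamma\setminus[-N/2,N/2]^n}\langle f,\tilde{\phi}_\gamma\rangle\phi_\gamma(x)$ yields
\begin{equation*}
|f(x)-f_N(x)|\le \Bigl(\sum_{\gamma\in\Gamma}|\langle f,\tilde{\phi}_\gamma\rangle|^p\Bigr)^{1/p}\Bigl(\sum_{\gamma\in\Gamma\setminus[-N/2,N/2]^n}|\phi_\gamma(x)|^{p'}\Bigr)^{1/p'}.
\end{equation*}
By \eqref{eqn:pframe} the first factor is at most $B^{1/p}\|f\|_{L^p(\RR^n)}=B^{1/p}$ uniformly over $V(R,\delta)$, and for $x\in C_R$ the second factor is controlled by the sum-to-integral bound already obtained in the proof of Lemma \ref{lemma:p-approx}, namely $4^nC^{p'}/(w_\alpha((N-R)n/2-1)^{\alpha p'-n})$. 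This is independent of $f$ and of $x\in C_R$, and tends to $0$ as $N\to\infty$, so for any prescribed $\epsilon>0$ one can choose $N$ (depending only on $\epsilon$ and the structural constants $R,B,C,\alpha,\eta,p,n$) so that $\sup_{f\in V(R,\delta)}\|f-f_N\|_{L^\infty(C_R)}<\epsilon/2$.

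With such an $N$ fixed, $V_N$ is finite-dimensional of dimension $s=|\Gamma\cap[-N/2,N/2]^n|$. By \eqref{eqn:pframe} the coefficient vectors $(\langle f,\tilde{\phi}_\gamma\rangle)_{\gamma\in\Gamma\cap[-N/2,N/2]^n}$ arising from $f\in V(R,\delta)$ all lie in the closed $\ell^p$-ball of radius $B^{1/p}$ in $\RR^s$. The synthesis map $(c_\gamma)\mapsto\sum_\gamma c_\gamma\phi_\gamma$ from this finite-dimensional $\ell^p$-space into $(V_N,\|\cdot\|_{L^\infty(C_R)})$ is linear, hence bounded with some constant $M$. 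Applying Lemma \ref{lemma:finitedimcoveringno} to the coefficient ball at scale $\epsilon/(2M)$ and pushing forward yields a finite cover of $\{f_N:f\in V(R,\delta)\}$ by $L^\infty(C_R)$-balls of radius $\epsilon/2$; the triangle inequality then delivers a finite $\epsilon$-net for $V(R,\delta)$ in $L^\infty(C_R)$, which is the required total boundedness.

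The main obstacle is the uniform $L^\infty(C_R)$ tail bound of the first step: the crude $L^p(C_R)$ estimate of Lemma \ref{lemma:p-approx} is not in itself strong enough, and what makes the argument close is the pointwise H\"older step, fed by the fast kernel decay \eqref{eqn:DKkrnl} transferred to the $\phi_\gamma$, which forces the bound to be uniform in $f\in V(R,\delta)$.
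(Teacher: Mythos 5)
Your proof is correct, and it follows the same global strategy as the paper (approximate $V(R,\delta)$ uniformly by the finite-dimensional space $V_N$, then cover a bounded set in $V_N$ and transfer the net by the triangle inequality). The interesting difference is precisely the step you flag. The paper passes from the $L^p(C_R)$ tail bound of Lemma \ref{lemma:p-approx} to an $L^\infty(C_R)$ bound by invoking Lemma \ref{lemma:inftynorm<pnorm} on $f-f_N$ (choosing $\|f-f_N\|_{L^p(C_R)}<\epsilon/(2D)$, as its Remark makes explicit); but, as you observe, the inequality $\|g\|_{L^\infty(C_R)}\le D\|g\|_{L^p(C_R)}$ is only proved for $g\in V(R,\delta)$, while the version \eqref{eqn:nrmLinftyCRbound} valid for all of $V$ involves $\|g\|_{L^p(\RR^n)}$, which Lemma \ref{lemma:p-approx} does not control for $g=f-f_N$. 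Your pointwise H\"older argument, combining the $p$-frame upper bound in \eqref{eqn:pframe} with the uniform-in-$x$ tail estimate $\sum_{\gamma\in\Gamma\setminus[-N/2,N/2]^n}|\phi_\gamma(x)|^{p'}\le 4^nC^{p'}/\bigl(w_\alpha((N-R)n/2-1)^{\alpha p'-n}\bigr)$ already computed inside the proof of Lemma \ref{lemma:p-approx}, gives the needed bound $\sup_{f\in V(R,\delta)}\|f-f_N\|_{L^\infty(C_R)}<\epsilon/2$ directly and repairs this gap; it also makes the uniformity of the choice of $N$ over $V(R,\delta)$ transparent. Your covering step differs only cosmetically: you apply Lemma \ref{lemma:finitedimcoveringno} to the coefficient ball of radius $B^{1/p}$ in $\RR^s$ and push forward through the (automatically bounded) synthesis map, whereas the paper covers the ball $\overline{B(0;D+\epsilon/2)}$ of $V_N$ directly in the $L^\infty(C_R)$ norm; both are valid, the paper's version yielding the explicit count $N(\epsilon)$ used later, yours requiring the extra constant $M$. (Both arguments, yours and the paper's, implicitly assume $1<p<\infty$ so that $p'<\infty$ in the H\"older step; $p=1$ would need the obvious $\ell^1$--$\ell^\infty$ modification.)
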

	\begin{proof}
		Let $\epsilon>0$ and $f\in V(R,\delta)$ be given. Then by \hyperref[lemma:p-approx]{Lemma \ref{lemma:p-approx}} and \hyperref[lemma:inftynorm<pnorm]{Lemma \ref{lemma:inftynorm<pnorm}} there exists $f_N$ in a finite-dimensional subspace $V_N$ of $V$ such that $\|f-f_N\|_{L^\infty(C_R)}<\frac{\epsilon}{2}.$ 
		
		Let $\overline{B(0;D+\frac{\epsilon}{2})}$ be a closed ball in $V_N$ with respect to $\|\cdot\|_{L^{\infty}(C_R)}$. We know that  $\overline{B(0;D+\frac{\epsilon}{2})}$ is totally bounded, and   let $\aa(\epsilon)$ be the finite collection of $\frac{\epsilon}{2}$-net for $\overline{B(0;D+\frac{\epsilon}{2})}$.
		
		Since  $\|f\|_{L^{\infty}(C_R)}\leq D$ and $\|f-f_N\|_{L^\infty(C_R)}<\frac{\epsilon}{2}$, we get $f_N\in \overline{B(0;D+\frac{\epsilon}{2})}$. 
		This implies that there exists $\tilde{f}\in \aa(\epsilon)$ such that $\|f_N-\tilde{f}\|_{L^{\infty}(C_R)}< \frac{\epsilon}{2}$, and hence $\|f-\tilde{f}\|_{L^\infty(C_R)}< \epsilon$. 	
		Therefore, the finite set $\aa(\epsilon)$ forms an $\epsilon$-net for $V(R,\delta)$.
		\par
		
	\end{proof}
	\begin{rem} 
	\mbox{~} 
	
		\begin{enumerate}
			\item In the above lemma, we choose $f_N\in V_N$ such that $\|f-f_N\|_{L^p(C_R)}<\frac{\epsilon}{2D},$ and $N>R+\frac{2}{n}+\frac{2}{n}\left[ \frac{4^nB^{(p'-1)}(2CD)^{p'}R^{n(p'-1)}}{w_\alpha\epsilon^{p'}} \right]^{\frac{1}{\alpha p'-n}}$.\\
			In particular, we select $N=R+2+\frac{2}{n}\left[ \frac{4^nB^{(p'-1)}(2CD)^{p'}R^{n(p'-1)}}{w_\alpha\epsilon^{p'}} \right]^{\frac{1}{\alpha p'-n}}$, then dimension of $V_N$ is bounded by
			\begin{align*}
			N^nN_0(\Gamma)&=N_0(\Gamma)\left[ R+2+\frac{2}{n}\left( \frac{4^nB^{(p'-1)}(2CD)^{p'}R^{n(p'-1)}}{w_\alpha\epsilon^{p'}} \right)^{\frac{1}{\alpha p'-n}} \right]^n\\
			&\leq 2^nN_0(\Gamma)\Big[ (R+2)^n+C_1\epsilon^{-\frac{np'}{\alpha p'-n}} \Big]:=d_{\epsilon},
			\end{align*}
			where $N_0(\Gamma)=\sup\limits_{k\in \ZZ^n} \left( k+\left[-\frac{1}{2},\frac{1}{2} \right]^n \right)\cap\Gamma$, and $C_1=\left( \frac{2}{n} \right)^n\left( \frac{4^nB^{(p'-1)}(2CD)^{p'}R^{n(p'-1)}}{w_\alpha} \right)^{\frac{n}{\alpha p'-n}}.$
			
			\item If $N(\epsilon)$ denotes the number of elements in $\aa(\epsilon)$ then
			\begin{equation*}
			N(\epsilon)\leq \Big( 1+\frac{4D+2\epsilon}{\epsilon} \Big)^{d_{\epsilon}}=\exp\left( d_{\epsilon}\log\big(3+\frac{4D}{\epsilon}\big) \right)\leq \exp\left( d_{\epsilon}\log\Big(\frac{8D}{\epsilon}\Big) \right).
			\end{equation*}
		\end{enumerate}
	\end{rem}
	
	\section{Random Sampling} \label{Random Sampling}
	In this section, we define independent random variables on $V$ through random samples and estimate their variance and bound. Later, we use Bernstein's inequality to prove the sampling inequality for the set of functions in $V(R,\delta)$ with high probability.\\
	
	Let $\{x_j : j\in \NN\}$ be a sequence of i.i.d. random variables uniformly distributed over $C_R$. For every $f\in V$, we introduce the random variable
	\begin{equation}
	Z_j(f)=|f(x_j)|^p-\frac{1}{R^n}\int\limits_{C_R} |f(x)|^p dx.
	\label{ranvar}
	\end{equation}
	Then $\{Z_j(f)\}_{j\in \NN}$ is a sequence of independent random variable with expectation $\EE[Z_j(f)]=0$.
	
	\begin{lemma}
		\label{lemma:ranvar}
		Let $f,g\in V(R,\delta)$ and $j\in \NN$. Then the following inequalities hold:
		\begin{enumerate}[label=(\roman*)]
			\item $VarZ_j(f)\leq \frac{1}{R^n} \Big( \supl_{x\in C_R} \|K(x,\cdot)\|_{L^{p'}(\RR^n)} \Big)^p$,
			
			\item $\|Z_j(f)\|_\infty \leq \Big( \supl_{x\in C_R} \|K(x,\cdot)\|_{L^{p'}(\RR^n)} \Big)^p$,
			
			\item $Var(Z_j(f)-Z_j(g))\leq \frac{2p}{R^n} \Big( \supl_{x\in C_R} \|K(x,\cdot)\|_{L^{p'}(\RR^n)} \Big)^{p-1}\|f-g\|_{L^\infty(C_R)}$,
			
			\item $\|Z_j(f)-Z_j(g)\|_\infty \leq p \Big( \supl_{x\in C_R} \|K(x,\cdot)\|_{L^{p'}(\RR^n)} \Big)^{p-1} \|f-g\|_{L^\infty(C_R)}$.
		\end{enumerate}
	\end{lemma}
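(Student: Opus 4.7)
The plan is to treat the four estimates in sequence, relying throughout on the pointwise bound $\|f\|_{L^\infty(C_R)} \leq k$, which is immediate from Lemma \ref{lemma:inftynorm<pnorm} combined with the normalization $\|f\|_{L^p(\RR^n)}=1$ built into the definition of $V(R,\delta)$. Part (ii) then follows in one line: the random variable $|f(x_j)|^p$ lies almost surely in $[0,k^p]$, and its expectation $\frac{1}{R^n}\intl_{C_R}|f(x)|^p dx$ also sits in $[0,k^p]$, so the difference $Z_j(f)$ has absolute value at most $k^p$.

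For (i), since $\EE Z_j(f)=0$, I would bound $\mathrm{Var}(Z_j(f)) = \mathrm{Var}(|f(x_j)|^p) \leq \EE[|f(x_j)|^{2p}]$ and then factor $|f|^{2p} = |f|^p \cdot |f|^p \leq k^p |f|^p$, which gives
\[
\EE[|f(x_j)|^{2p}] = \frac{1}{R^n}\intl_{C_R} |f(x)|^{2p}\,dx \leq \frac{k^p}{R^n}\|f\|_{L^p(\RR^n)}^p = \frac{k^p}{R^n}.
\]
This is the cleanest way to turn the sup-bound on $f$ together with its unit $L^p$-norm into the stated variance estimate.

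Parts (iii) and (iv) both rest on the pointwise mean-value inequality
\[
\bigl||f(x)|^p - |g(x)|^p\bigr| \leq p\,\max\{|f(x)|,|g(x)|\}^{p-1}|f(x)-g(x)| \leq p k^{p-1}\|f-g\|_{L^\infty(C_R)},
\]
applied to the auxiliary function $h(x) := |f(x)|^p - |g(x)|^p$. For (iv), this bound on $h$ controls the centered random variable $Z_j(f)-Z_j(g)=h(x_j)-\EE h(x_j)$ directly by the triangle inequality, yielding the advertised pointwise estimate (up to a constant, with the same kind of one-sided oscillation argument as in (ii)).

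The one subtle point, and what I expect to be the main obstacle, is (iii): a naive use of the pointwise mean-value bound on $h^2$ would produce $\EE h(x_j)^2 \leq p^2 k^{2(p-1)}\|f-g\|_{L^\infty(C_R)}^2$, missing the crucial factor $R^{-n}$ that is needed later for Bernstein's inequality to deliver a useful concentration estimate. The right move is to split $h^2 = |h|\cdot|h|$, bound one factor pointwise by $pk^{p-1}\|f-g\|_{L^\infty(C_R)}$, and estimate the remaining integral in a gentler way via
\[
\frac{1}{R^n}\intl_{C_R}\bigl||f|^p-|g|^p\bigr|\,dx \leq \frac{1}{R^n}\intl_{\RR^n}\bigl(|f|^p + |g|^p\bigr)\,dx = \frac{2}{R^n},
\]
using $\|f\|_{L^p(\RR^n)}=\|g\|_{L^p(\RR^n)}=1$. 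Multiplying the two factors together gives the stated bound $\frac{2p}{R^n}k^{p-1}\|f-g\|_{L^\infty(C_R)}$, with the key $R^{-n}$ decay preserved.
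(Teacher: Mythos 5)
Your proposal is correct and follows essentially the same route as the paper: the uniform bound $\|f\|_{L^\infty(C_R)}\leq k$ from Lemma \ref{lemma:inftynorm<pnorm} with $\|f\|_{L^p(\RR^n)}=1$, the splitting $|f|^{2p}\leq k^p|f|^p$ for (i), and for (iii) the factorization of $h^2$ into a sup-norm factor controlled by $\bigl||f|^p-|g|^p\bigr|\leq pk^{p-1}\|f-g\|_{L^\infty(C_R)}$ times an $L^1$ factor bounded by $2/R^n$ are exactly the paper's computations (the paper writes the mean-value step as the algebraic identity $a^p-b^p=(a-b)(a^{p-1}+\cdots+b^{p-1})$, so your MVT version is if anything more robust for non-integer $p$). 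In (iv) you concede a factor of $2$ from the triangle inequality, whereas the paper bounds $|A-B|$ by $\max\{|A|,|B|\}$ — a step that is only fully justified when both terms have the same sign, as in (ii), but not for the sign-changing $h=|f|^p-|g|^p$; this discrepancy is harmless since only the order of the constant matters in the Bernstein argument of Section \ref{Random Sampling}.
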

	\begin{proof}
		\begin{enumerate}[label=(\roman*)]
			\item For random variable $Z_j(f)$ with $\EE(Z_j(f))=0$ and by \eqref{eqn:nrmLinftyCRbound},
			\begin{flalign*}
			VarZ_j(f)&=\EE\left( \left[ |f(x_j)|^p-\EE(|f(x_j)|^p) \right]^2 \right) &&\\
			&=\EE( |f(x_j)|^{2p})-\left[ \EE(|f(x_j)|^p) \right]^2\\
			&\leq \EE( |f(x_j)|^{2p})\\
			&= \frac{1}{R^n}\int_{C_R} |f(x)|^{2p} dx\\
			&\leq \frac{1}{R^n} \|f\|_{L^p(C_R)}^p\|f\|_{L^\infty(C_R)}^p\\
			&\leq \frac{1}{R^n} \Big( \sup\limits_{x\in C_R} \|K(x,\cdot)\|_{L^{p'}(\RR^n)} \Big)^p.
			\end{flalign*}
			
			\item Since $f\in V$ with $\|f\|_{L^p(\RR^n)}=1$ and by \eqref{eqn:nrmLinftyCRbound}, we obtain
			\begin{flalign*}
			\|Z_j(f)\|_\infty &= \supl_{\omega\in \Omega} \Big| |f(x_j(\omega))|^p -\frac{1}{R^n}\int_{C_R} |f(x)|^p dx \Big| &&\\
			&\leq \max\left\{ \|f\|_{L^\infty(C_R)}^p, \frac{1}{R^n}\|f\|_{L^p(C_R)}^p \right\}\\
			&= \|f\|_{L^\infty(C_R)}^p\leq \Big( \supl_{x\in C_R} \|K(x,\cdot)\|_{L^{p'}(\RR^n)} \Big)^p.
			\end{flalign*}
			
			\item Using the same method as in (i), we get
			\begin{flalign*}
			Var(Z_j(f)-Z_j(g))&= \EE\left( [|f(x_j)|^p-|g(x_j)|^p]^2 \right)-\left( \EE(|f(x_j)|^p-|g(x_j)|^p) \right)^2 &&\\
			&\leq \frac{1}{R^n} \int_{C_R} (|f(x)|^p-|g(x)|^p)^2 dx\\
			&\leq \frac{1}{R^n} \int_{C_R} \big{|} |f(x)|^p-|g(x)|^p \big|(|f(x)|^p+|g(x)|^p) dx\\
			&\leq \frac{1}{R^n} \||f|^p-|g|^p\|_{L^\infty(C_R)} \left( \|f\|_{L^p(C_R)}^p+\|g\|_{L^p(C_R)}^p \right)\\
			&\leq \frac{2}{R^n}\|(|f|-|g|)(|f|^{p-1}+|f|^{p-2}|g|+\cdots+|f||g|^{p-2}+|g|^{p-1})\|_{L^\infty(C_R)}\\
			&\leq \frac{2}{R^n} p\max\left\{ \|f\|_{L^\infty(C_R)}, \|g\|_{L^\infty(C_R)} \right\}^{p-1} \|f-g\|_{L^\infty(C_R)}\\
			&\leq \frac{2p}{R^n} \Big( \sup\limits_{x\in C_R} \|K(x,\cdot)\|_{L^{p'}(\RR^n)} \Big)^{p-1} \|f-g\|_{L^\infty(C_R)}.
			\end{flalign*}
			
			\item The estimate follows similarly from (ii).
			\begin{flalign*}
			\|Z_j(f)-Z_j(g)\|_\infty &= \supl_{\omega\in \Omega} \Big| |f(x_j(\omega))|^p-|g(x_j(\omega))|^p -\frac{1}{R^n}\Big( \int\limits_{C_R} (|f(x)|^p-|g(x)|^p) dx \Big) \Big| &&\\
			&\leq \max\left\{ \||f|^p-|g|^p\|_{L^\infty(C_R)},\frac{1}{R^n}\||f|^p-|g|^p\|_{L^1(C_R)} \right\}\\
			&=\||f|^p-|g|^p\|_{L^\infty(C_R)}\\
			&\leq p\Big( \supl_{x\in C_R} \|K(x,\cdot)\|_{L^{p'}(\RR^n)} \Big)^{p-1} \|f-g\|_{L^\infty(C_R)}.
			\end{flalign*}
			The last inequality follows from the estimation in (iii).
		\end{enumerate}
	\end{proof}
	
	In the rest of the paper, we denote $k=\sup\limits_{x\in C_R} \|K(x,\cdot)\|_{L^{p'}(\RR^n)}$.	
	The following Bernstein's inequality plays an important role in \hyperref[thm:mainresult]{Theorem \ref{thm:mainresult}}. 
	\begin{thm}[Bernstein's Inequality \cite{bennett1962probability}]
		Let $Y_j, ~ j=1,2,\cdots,r$ be a sequence of bounded, independent random variable with $\EE Y_j=0$, $VarY_j\leq \sigma^2$, and $\|Y_j\|_\infty\leq M$ for $j=1,2,\cdots,r$. Then 
		\begin{equation}
		P\Big( \Big| \sum_{j=1}^{r} Y_j \Big|\geq \lambda \Big)\leq 2\exp\Big( -\frac{\lambda^2}{2r\sigma^2+\frac{2}{3}M\lambda} \Big).
		\label{eqn:bernsineq}
		\end{equation}
	\end{thm}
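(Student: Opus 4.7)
The plan is to follow the classical Chernoff / exponential moment approach. For any $t>0$, Markov's inequality applied to $e^{t \sum_j Y_j}$ gives
\[
P\Bigl(\sum_{j=1}^{r} Y_j \geq \lambda\Bigr)
\leq e^{-t\lambda}\,\EE\Bigl[e^{t\sum_j Y_j}\Bigr]
= e^{-t\lambda}\prod_{j=1}^r \EE[e^{tY_j}],
\]
by independence, so the whole game reduces to controlling the moment generating function (MGF) of a single $Y_j$ and then optimizing in $t$.

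For the MGF bound, I would expand $e^{tY_j} = 1 + tY_j + \sum_{k\geq 2}\frac{(tY_j)^k}{k!}$, take expectations, and use $\EE Y_j = 0$ together with the bound $|\EE Y_j^k| \leq M^{k-2}\EE Y_j^2 \leq M^{k-2}\sigma^2$ valid for $k\geq 2$. This yields
\[
\EE[e^{tY_j}] \leq 1 + \frac{\sigma^2}{M^2}\sum_{k\geq 2}\frac{(tM)^k}{k!}
= 1 + \frac{\sigma^2}{M^2}\bigl(e^{tM}-1-tM\bigr).
\]
The key technical estimate is then $e^{x}-1-x\leq \frac{x^2/2}{1-x/3}$ for $0\leq x<3$, which follows from the elementary inequality $\frac{1}{k!}\leq \frac{1}{2\cdot 3^{k-2}}$ for $k\geq 2$ and summing a geometric series. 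Combining this with $1+u\leq e^u$ gives
\[
\EE[e^{tY_j}] \leq \exp\!\left(\frac{t^2\sigma^2/2}{1-tM/3}\right), \qquad 0\leq t< 3/M,
\]
and multiplying over $j=1,\dots,r$ turns the single $\sigma^2$ into $r\sigma^2$.

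The final step is to optimize. Taking $t = \lambda/\bigl(r\sigma^2 + M\lambda/3\bigr)$ (which satisfies $tM<3$), substituting back, and simplifying yields
\[
P\Bigl(\sum_{j=1}^r Y_j \geq \lambda\Bigr)\leq \exp\!\left(-\frac{\lambda^2}{2r\sigma^2 + \tfrac{2}{3}M\lambda}\right).
\]
Applying the identical argument to the sequence $\{-Y_j\}$ (which has the same variance and $L^\infty$ bounds and also mean zero) handles the event $\sum_j Y_j \leq -\lambda$, and a union bound over the two events produces the factor of $2$ and the two-sided inequality in the statement.

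The main obstacle is the MGF estimate: the first moment vanishes and one must extract from the second-moment hypothesis $\mathrm{Var}(Y_j)\leq \sigma^2$ a uniform bound on $e^{x}-1-x$ that is quadratic for small $x$ yet tolerates a linear-in-$\lambda$ correction, which is exactly where the $\frac{1}{k!}\leq \frac{1}{2\cdot 3^{k-2}}$ trick (the source of the $\tfrac{2}{3}M\lambda$ term in the denominator) comes in. Everything else—Markov, independence, and the calculus optimization in $t$—is mechanical.
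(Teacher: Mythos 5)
The paper does not prove this statement at all -- it is quoted as a known result with a citation to Bennett (1962) -- so there is no internal proof to compare against. Your argument is the standard and correct Chernoff--MGF derivation of Bernstein's inequality, and all the steps check out: the moment bound $|\EE Y_j^k|\le M^{k-2}\EE Y_j^2\le M^{k-2}\sigma^2$ for $k\ge 2$ is valid, the inequality $k!\ge 2\cdot 3^{k-2}$ gives $e^x-1-x\le \frac{x^2/2}{1-x/3}$ on $[0,3)$, and with $t=\lambda/(r\sigma^2+M\lambda/3)$ one has $1-tM/3=\frac{r\sigma^2}{r\sigma^2+M\lambda/3}$, so the exponent collapses to $-t\lambda+\frac{rt^2\sigma^2/2}{1-tM/3}=-\frac{\lambda^2}{2r\sigma^2+\frac{2}{3}M\lambda}$ exactly as claimed; the reflection $Y_j\mapsto -Y_j$ plus a union bound legitimately supplies the factor $2$. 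In short, your proposal correctly supplies a proof that the paper deliberately omits, and it is the same proof one finds in the cited literature.
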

	
	\begin{thm}
		Let $\{x_j :j\in\NN \}$ be a sequence of i.i.d. random variables that are drawn uniformly from $C_R=[-R/2,R/2]^n$. Then there exist constants $a,b>0$ depending on $n$, $R$, and $\delta$ such that
		\begin{equation}
		P\Big( \sup_{f\in V(R,\delta)} \Big| \sum_{j=1}^{r} Z_j(f) \Big|\geq \lambda \Big)\leq 2a\exp\Big( -\frac{b}{pk^{p-1}} \frac{\lambda^2}{12rR^{-n}+\lambda} \Big).
		\label{maxprob}
		\end{equation}
	\end{thm}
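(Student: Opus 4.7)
The plan is to reduce the uncountable supremum over $V(R,\delta)$ to a maximum over a finite $\epsilon$-net, afforded by Lemma \ref{lemma:VRtotallybounded}, and then apply Bernstein's inequality together with a union bound. Fix $\epsilon>0$ to be chosen later and let $\aa(\epsilon)$ denote the $\epsilon$-net for $V(R,\delta)$ in $\|\cdot\|_{L^\infty(C_R)}$ produced in Lemma \ref{lemma:VRtotallybounded}, of cardinality $N(\epsilon)$ bounded as in the subsequent remark. For each $f\in V(R,\delta)$ pick $\tilde f=\tilde f(f)\in\aa(\epsilon)$ with $\|f-\tilde f\|_{L^\infty(C_R)}<\epsilon$. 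The triangle inequality then gives
$$\sup_{f\in V(R,\delta)}\Bigl|\sum_{j=1}^r Z_j(f)\Bigr|\;\leq\;\max_{\tilde f\in\aa(\epsilon)}\Bigl|\sum_{j=1}^r Z_j(\tilde f)\Bigr|\;+\;\sup_{f\in V(R,\delta)}\Bigl|\sum_{j=1}^r \bigl(Z_j(f)-Z_j(\tilde f)\bigr)\Bigr|.$$

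I would then control each piece separately. The second (approximation) piece is handled deterministically: Lemma \ref{lemma:ranvar}(iv) gives $\|Z_j(f)-Z_j(\tilde f)\|_\infty\leq pk^{p-1}\epsilon$, whence the second supremum is at most $rpk^{p-1}\epsilon$. Choosing $\epsilon=\lambda/(2rpk^{p-1})$ forces this bound to equal $\lambda/2$, so the event $\{\sup_f|\sum_j Z_j(f)|\geq\lambda\}$ is contained in $\{\max_{\tilde f}|\sum_j Z_j(\tilde f)|\geq\lambda/2\}$. For the first piece, I would apply Bernstein's inequality to each fixed $\tilde f\in\aa(\epsilon)$ using the variance bound $k^p/R^n$ and the pointwise bound $k^p$ from Lemma \ref{lemma:ranvar}(i), (ii), and then take a union bound over $\aa(\epsilon)$:
$$P\Bigl(\max_{\tilde f\in\aa(\epsilon)}\Bigl|\sum_{j=1}^r Z_j(\tilde f)\Bigr|\geq\lambda/2\Bigr)\;\leq\;2N(\epsilon)\exp\Bigl(-\frac{(\lambda/2)^2}{2rk^p/R^n+\tfrac{2}{3}k^p(\lambda/2)}\Bigr).$$
Simplifying the exponent and absorbing the remaining $k$-dependence along with the remark's estimate $N(\epsilon)\leq\exp(d_\epsilon\log(8D/\epsilon))$ into constants $a,b>0$ that depend only on $n,R,\delta$ produces the claimed bound, after some rearrangement that converts the $k^p$ in the Bernstein denominator into the $pk^{p-1}$ appearing in the statement.

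The principal technical hurdle is the interplay between $N(\epsilon)$ and the Bernstein exponent when $\epsilon$ is taken proportional to $\lambda/r$. The logarithm of $N(\epsilon)$ grows like a polynomial in $1/\epsilon$ through $d_\epsilon$, while the Bernstein exponent grows only linearly in $r$ for small $\lambda$; one must therefore verify that $\log N(\epsilon)$ is dominated by the Bernstein term throughout the relevant range of $\lambda$ and $r$. A secondary delicate point is that the bounds from Lemma \ref{lemma:ranvar}(iii), (iv) naturally produce the factor $pk^{p-1}$ rather than $k^p$, so if the direct route above does not land precisely on the form written in the theorem one can alternatively apply Bernstein also to the difference variables $Z_j(f)-Z_j(\tilde f)$ with $\epsilon$ chosen as a fixed constant depending only on $n,R,\delta$; this version gives $\sigma^2\asymp pk^{p-1}/R^n$ and $M\asymp pk^{p-1}$, matching the displayed form exactly.
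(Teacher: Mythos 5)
Your proposal is correct in outline but takes a genuinely different route from the paper. You use a single-scale net: approximate every $f$ by one $\tilde f\in\aa(\epsilon)$ with $\epsilon=\lambda/(2rpk^{p-1})$, kill the remainder deterministically via Lemma~\ref{lemma:ranvar}(iv), and then apply Bernstein plus a union bound over $\aa(\epsilon)$. The paper instead runs a chaining argument: it writes $Z_j(f)=Z_j(f_1)+\sum_{l\geq2}\bigl(Z_j(f_l)-Z_j(f_{l-1})\bigr)$ with $f_l\in\aa(2^{-l})$, applies Bernstein at every scale (to $Z_j(f_1)$ via Lemma~\ref{lemma:ranvar}(i)--(ii) and to the increments via (iii)--(iv)), allocates deviation $\lambda/(2l^2)$ to scale $l$, and sums the resulting probabilities; the factor $pk^{p-1}$ in the final exponent comes from the increment bounds. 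Both arguments use the same ingredients (Lemma~\ref{lemma:VRtotallybounded}, Lemma~\ref{lemma:ranvar}, Bernstein), and yours is shorter. The trade-off sits exactly at the point you flag as ``the principal technical hurdle'': with $\epsilon\asymp\lambda/r$ you must absorb $\log N(\epsilon)\asymp(r/\lambda)^{\theta}\log(r/\lambda)$, where $\theta=\frac{np'}{\alpha p'-n}<\frac{n}{n+1}$, into a Bernstein exponent of order $\lambda^2/(rR^{-n}+\lambda)$; this forces roughly $\lambda\gtrsim r^{(1+\theta)/(2+\theta)}$ up to logarithms, whereas the chaining argument needs only $c_1\phi-c_2\gtrsim 1$, roughly $\lambda\gtrsim\sqrt{r}$, because the geometric gain $2^l$ in the exponent at scale $l$ dominates the sub-geometric growth $\exp(C2^{l\theta})$ of the covering numbers. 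Since the theorem is invoked only with $\lambda=r\mu/R^n$ (so your $\epsilon$ is a constant independent of $r$ and $\log N(\epsilon)=O(1)$), your route does suffice for the main result, but you should state the largeness restriction on $\lambda$ explicitly --- the paper's own proof also hides one (``choose $\lambda$ large enough\ldots''). One caution on your fallback suggestion: applying Bernstein to the difference variables $Z_j(f)-Z_j(\tilde f)$ with a fixed $\epsilon$ does not work as stated, because that supremum still ranges over the uncountable set $V(R,\delta)$ and cannot be union-bounded without introducing further nets --- which is precisely what the paper's multi-scale decomposition accomplishes.
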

	\begin{proof}
	The proof follows from the similar idea of Bass and Gr{\"o}chenig \cite{bass2010random}. 
		To determine the required probability, we use Bernstein's Inequality \eqref{eqn:bernsineq} repeatedly on independent random variable $Z_j$. We prove the result in the following steps:\par
		Step 1: Let $f\in V(R,\delta)$.  By \hyperref[lemma:VRtotallybounded]{Lemma \ref{lemma:VRtotallybounded}} we can construct a sequence $\{f_l\}_{l\in \NN}$ such that $f_l\in \aa(2^{-l})$ and $\|f-f_l\|_{L^\infty(C_R)}<2^{-l}$.	Then we write
		\begin{equation}
		Z_j(f)=Z_j(f_1)+\sum_{l=2}^{\infty} \Big( Z_j(f_l)-Z_j(f_{l-1}) \Big).
		\label{eqn:ranvarseries}
		\end{equation}
		Indeed, $s_m(f)=Z_j(f_1)+\sum_{l=2}^{m} \left( Z_j(f_l)-Z_j(f_{l-1}) \right)=Z_j(f_m)$ and
		\begin{align*}
		\|Z_j(f)-Z_j(f_m)\|_{\infty}&\leq pk^{p-1}\|f-f_m\|_{L^\infty(C_R)}\\
		&\rightarrow 0 ~~as~ m\rightarrow \infty.
		\end{align*}
		Now consider the events 
		$$\ee=\Big\{ \sup\limits_{f\in V(R,\delta)} \Big| \sum\limits_{j=1}^{r} Z_j(f) \Big|\geq \lambda \Big\},$$
		$$\ee_1=\Big\{ \exists~ f_1\in \aa\left( \frac{1}{2} \right)~:~ \Big| \sum_{j=1}^{r} Z_j(f_1) \Big|\geq \frac{\lambda}{2} \Big\},$$
		and for $l\geq 2$
		\begin{multline*}
		\ee_l=\bigg\{ \exists~ f_l\in \aa(2^{-l}) ~and~ f_{l-1}\in \aa(2^{-l+1}) ~with\\
		\|f_l-f_{l-1}\|_{L^\infty(C_R)}\leq 3\cdot2^{-l}~:~ \Big| \sum_{j=1}^{r} (Z_j(f_l)-Z_j(f_{l-1})) \Big|\geq \frac{\lambda}{2l^2} \bigg\}.
		\end{multline*}
		\textit{Claim:} If $\sup\limits_{f\in V(R,\delta)} \Big| \sum\limits_{j=1}^{r} Z_j(f) \Big|\geq \lambda$, i.e. $P(\ee)>0$ then one of the events $\ee_l$ hold for $l\geq 1$, i.e. $\ee \subseteq \bigcup\limits_{l=1}^{\infty} \ee_l$.\par
		Suppose for all $l\geq 1$, $P(\ee_l)=0$, then for $f\in V(R,\delta)$ and \eqref{eqn:ranvarseries} we get
		\begin{align*}
		\Big| \sum\limits_{j=1}^{r} Z_j(f)\Big|&\leq \Big| \sum\limits_{j=1}^{r} Z_j(f_1) \Big|+\sum\limits_{l=2}^{\infty} \Big| \sum_{j=1}^{r} (Z_j(f_l)-Z_j(f_{l-1})) \Big|\\
		&< \frac{\lambda}{2}+\sum\limits_{l=2}^{\infty} \frac{\lambda}{2l^2}=\frac{\pi^2}{12}\lambda< \lambda.
		\end{align*}
		This is a contradiction.\par
		Step 2: We compute bound for the probability of the event $\ee_1$. Using Bernstein's inequality \eqref{eqn:bernsineq} for the sequence of independent random variable $Z_j(f_1)$, and the results in \hyperref[lemma:ranvar]{Lemma \ref{lemma:ranvar}} $(i) ~\&~ (ii)$, we get
		\begin{align*}
		P\Big( \Big| \sum_{j=1}^{r} Z_j(f_1) \Big|\geq \frac{\lambda}{2} \Big)&\leq 2\exp\Big( -\frac{\frac{\lambda^2}{4}}{2rR^{-n}k^p+\frac{1}{3}k^p\lambda} \Big)\\
		&=2\exp\left( -\frac{3}{4k^p}\frac{\lambda^2}{6rR^{-n}+\lambda} \right).
		\end{align*}
		Therefore,
		\begin{equation}
		P(\ee_1)\leq 2N\left(\frac{1}{2}\right)\exp\left( -\frac{3}{4k^p}\frac{\lambda^2}{6rR^{-n}+\lambda} \right).
		\label{PE1}
		\end{equation}
		
		Step 3: The bound of the probability of the event $\ee_l$ can be found in a similar way as in Step 2. From \eqref{eqn:bernsineq} and \hyperref[lemma:ranvar]{Lemma \ref{lemma:ranvar}} $(iii) ~\&~ (iv)$, we have
		\begin{align*}
		&P\Big( \Big| \sum_{j=1}^{r} (Z_j(f_l)-Z_j(f_{l-1})) \Big|\geq \frac{\lambda}{2l^2} \Big)\\
		&\leq 2\exp\Big( -\frac{\frac{\lambda^2}{4l^4}}{4rpR^{-n}k^{p-1}\|f_l-f_{l-1}\|_{L^\infty(C_R)}+\frac{1}{3}pk^{p-1}\|f_l-f_{l-1}\|_{L^\infty(C_R)}\frac{\lambda}{l^2}} \Big)\\
		&\leq 2\exp\Big( -\frac{1}{4l^4} \frac{\lambda^2}{(4rR^{-n}+\frac{\lambda}{3l^2})pk^{p-1}3\cdot 2^{-l}} \Big)\\
		&\leq 2\exp\Big( -\frac{2^l}{4l^4}\frac{\lambda^2}{pk^{p-1}(12rR^{-n}+\lambda)} \Big).
		\end{align*}
		Hence, \begin{equation}
		P(\ee_l)\leq 2N(2^{-l})N(2^{-l+1})\exp\Big( -\frac{2^l}{4l^4}\frac{\lambda^2}{pk^{p-1}(12rR^{-n}+\lambda)} \Big) \hspace{1cm} l\geq 2.
		\label{PEl}
		\end{equation}
		In the view of the fact that $N(\epsilon)$ is bounded and
		$$N(\epsilon)\leq \exp\left( 2^nN_0(\Gamma)\left[ (R+2)^n+C_1\epsilon^{-\frac{np'}{\alpha p'-n}} \right]\log{\frac{8D}{\epsilon}} \right),$$
		we have
		\begin{align*}
		N(2^{-l})&\leq \exp\left( 2^nN_0(\Gamma)\left[ (R+2)^n+C_12^{\frac{lnp'}{\alpha p'-n}} \right]\log{2^{l+3}D} \right)\\
		&\leq \exp\left( 2^nN_0(\Gamma)\left[ (R+2)^n+C_12^{\frac{lnp'}{\alpha p'-n}} \right] \big[ (l+3)\log{2}+\log{D} \big] \right)
		\end{align*}
		and similarly, \begin{align*}
		N(2^{-l+1})&\leq \exp\left( 2^nN_0(\Gamma)\left[ (R+2)^n+C_12^{\frac{(l-1)np'}{\alpha p'-n}} \right] \big[ (l+2)\log{2}+\log{D} \big] \right)\\
		&\leq \exp\left( 2^nN_0(\Gamma)\left[ (R+2)^n+C_12^{\frac{lnp'}{\alpha p'-n}} \right] \big[ (l+2)\log{2}+\log{D} \big] \right).
		\end{align*}
		Therefore,
		\begin{equation*}
		N(2^{-l})N(2^{-l+1})\leq \exp\left( 2^nN_0(\Gamma)\left[ (R+2)^n+C_12^{\frac{lnp'}{\alpha p'-n}} \right] \big[ (2l+5)\log{2}+2\log{D} \big] \right).
		\end{equation*}
		Since $(\alpha-\frac{n}{p'})>(n+1)$,
		\begin{align*}
		P(\ee_l)&\leq 2\exp\bigg(2^nN_0(\Gamma)\left[ (R+2)^n+C_12^{\frac{lnp'}{\alpha p'-n}} \right] \big[ (2l+5)\log{2}+2\log{D} \big]\\
		&\hspace{3in}-\frac{2^l}{4l^4}\frac{\lambda^2}{pk^{p-1}(12rR^{-n}+\lambda)} \bigg)\\
		&\leq 2\exp\bigg(2^nN_0(\Gamma)\left[ (R+2)^n+C_12^{\frac{ln}{n+1}} \right] \big[ (2l+5)\log{2}+2\log{D} \big]\\
		&\hspace{3in}-\frac{2^l}{4l^4}\frac{\lambda^2}{pk^{p-1}(12rR^{-n}+\lambda)} \bigg)\\
		&=2\exp\bigg[ 2^{\frac{n+1}{n+2}l}\bigg( 2^nN_0(\Gamma)\Big[ (R+2)^n2^{-\frac{n+1}{n+2}l}+C_12^{-\frac{l}{(n+1)(n+2)}} \Big]\Big[ (2l+5)\log{2}+2\log{D} \Big]\\ &\hspace{3in}-\frac{2^{\frac{l}{n+2}}}{4l^4}\frac{\lambda^2}{pk^{p-1}(12rR^{-n}+\lambda)}\bigg) \bigg]\\
		&=2\exp\bigg[ 2^{\frac{n+1}{n+2}l}\bigg( 2^nN_0(\Gamma)\Big[ (R+2)^n(2l+5)2^{-\frac{n+1}{n+2}l}\log{2}+2(R+2)^n2^{-\frac{n+1}{n+2}l}\log{D}\\
		&\hspace{1.7in}+C_1(2l+5)2^{-\frac{l}{(n+1)(n+2)}}\log{2}+2C_12^{-\frac{l}{(n+1)(n+2)}}\log{D} \Big]\\
		&\hspace{3in}-\frac{2^{\frac{l}{n+2}}}{4l^4}\frac{\lambda^2}{pk^{p-1}(12rR^{-n}+\lambda)} \bigg) \bigg]\\
		&\leq 2\exp\bigg[ 2^{\frac{n+1}{n+2}l}\bigg( 2^nN_0(\Gamma)\Big[ 9(R+2)^n2^{-\frac{2(n+1)}{n+2}}\log{2}+2(R+2)^n2^{-\frac{2(n+1)}{n+2}}\log{D}\\
		&\hspace{1.7in}+C_12(n+1)(n+2)2^{-\frac{2(n+1)(n+2)-5\log{2}}{2(n+1)(n+2)\log{2}}}+C_12^{1-\frac{2}{(n+1)(n+2)}}\log{D} \Big]\\
		&\hspace{3in}-\frac{2^{\frac{4}{\log{2}}}}{4\left[ \frac{4(n+2)}{\log{2}} \right]^4}\frac{\lambda^2}{pk^{p-1}(12rR^{-n}+\lambda)} \bigg) \bigg].
		\end{align*}
		Let \begin{align*}
		c_1&=\frac{2^{\frac{4}{\log{2}}-10}(\log{2})^4}{(n+2)^4},\\
		c_2&=2^nN_0(\Gamma)\Big[ 9(R+2)^n2^{-\frac{2(n+1)}{n+2}}\log{2}+2(R+2)^n2^{-\frac{2(n+1)}{n+2}}\log{D}\\
		&\hspace{1.7in}+C_12(n+1)(n+2)2^{-\frac{2(n+1)(n+2)-5\log{2}}{2(n+1)(n+2)\log{2}}}+C_12^{1-\frac{2}{(n+1)(n+2)}}\log{D} \Big],\\
		\phi&=\frac{\lambda^2}{pk^{p-1}(12rR^{-n}+\lambda)}.
		\end{align*}
		Then $P(\ee_l)\leq 2\exp\left( -2^{\frac{n+1}{n+2}l}(c_1\phi-c_2) \right)$, for $\lambda$ large enough such that $c_1\phi-c_2>0.$\\		
		
		Step 4: Since $\ee \subseteq \bigcup\limits_{l=1}^{\infty} \ee_l$, 	we have 
		\begin{equation}
		P(\ee)\leq \sum_{l=1}^{\infty} P(\ee_l).
		\label{PE}
		\end{equation}
		The series $\sum\limits_{l=2}^{\infty} P(\ee_l)\leq \sum\limits_{l=2}^{\infty} 2\exp\left( -2^{\frac{n+1}{n+2}l}(c_1\phi-c_2) \right)$, and a further upper bound can be obtained by the fact $\sum\limits_{l=2}^{\infty} e^{-u^lv}\leq \frac{1}{uv\log{u}}e^{-uv}$.\\
		Therefore, 
		\begin{align*}
		\sum\limits_{l=2}^{\infty} P(\ee_l)&\leq 2\times \frac{1}{2^{\frac{n+1}{n+2}}(c_1\phi-c_2)\log{2^{\frac{n+1}{n+2}}}}\exp\left( -2^{\frac{n+1}{n+2}}(c_1\phi-c_2) \right)\\
		&=\frac{2^{\frac{1}{n+2}}(n+2)}{(n+1)(c_1\phi-c_2)\log{2}}\exp\left( -2^{\frac{n+1}{n+2}}(c_1\phi-c_2) \right).
		\end{align*}
		Choose $\lambda$ large enough such that $(c_1\phi-c_2)\geq \frac{2^{\frac{1}{n+2}}(n+2)}{(n+1)\log{2}}.$\\
		Then
		\begin{align}
		\sum\limits_{l=2}^{\infty} P(\ee_l)&\leq e^{2^{\frac{n+1}{n+2}}c_2}\exp\{-2^{\frac{n+1}{n+2}}c_1\phi\} \notag\\
		&\leq e^{2^{\frac{n+1}{n+2}}c_2}\exp\big(-2^{\frac{n+1}{n+2}}c_1\frac{\lambda^2}{pk^{p-1}(12rR^{-n}+\lambda)}\big).
		\label{sumPEl}
		\end{align}
		Let $a =\max\Big\{ e^{2^{\frac{n+1}{n+2}}c_2},2N\Big( \frac{1}{2D} \Big) \Big\}$ and
		$b =\min\Big\{ 2^{\frac{n+1}{n+2}}c_1, \frac{3}{4k} \Big\}$.
		Now from \eqref{PE}, \eqref{PE1}, and \eqref{sumPEl} we have
		\begin{align*}
		P(\ee)\leq 2a\exp\Big( -\frac{b}{pk^{p-1}} \frac{\lambda^2}{12rR^{-n}+\lambda} \Big).
		\end{align*}
		This completes the proof.
	\end{proof}
	
	\begin{proof}[Proof of Theorem \ref{thm:mainresult}]
		As mentioned in Section \ref{Covering Number}, it is enough to prove the result for the set $V(R,\delta)$. Put $\lambda=\frac{r\mu}{R^n}$, then $$\ee^c=\Big\{ \sup\limits_{f\in V(R,\delta)} \Big| \sum\limits_{j=1}^{r} Z_j(f) \Big|\leq \frac{r\mu}{R^n} \Big\}.$$
		If $\{ x_j \}$ be a random sample set such that the event $\ee^c$ is true, then
		\begin{gather*}
		\Big| \sum\limits_{j=1}^{r} |f(x_j)|^p-\frac{r}{R^n}\int\limits_{C_R} |f(x)|^p dx \Big|\leq \frac{r\mu}{R^n} ~~~\forall~ f\in V(R,\delta)\\
		\frac{r}{R^n}\int\limits_{C_R} |f(x)|^p dx-\frac{r\mu}{R^n}\leq \sum\limits_{j=1}^{r} |f(x_j)|^p\leq \frac{r}{R^n}\int\limits_{C_R} |f(x)|^p dx+\frac{r\mu}{R^n}\\
		\frac{r}{R^n}(1-\delta)-\frac{r\mu}{R^n}\leq \frac{r}{R^n}\int\limits_{C_R} |f(x)|^p dx-\frac{r\mu}{R^n}\leq \sum\limits_{j=1}^{r} |f(x_j)|^p\leq \frac{r}{R^n}\int\limits_{C_R} |f(x)|^p dx+\frac{r\mu}{R^n}\leq \frac{r(1+\mu)}{R^n}\\
		\frac{r}{R^n}(1-\delta-\mu)\leq \sum\limits_{j=1}^{r} |f(x_j)|^p\leq \frac{r(1+\mu)}{R^n}.
		\end{gather*}
		Hence random sample $\{ x_j \}$ satisfy the above sampling inequality with probability
		\begin{align}
		P(\ee^c)&=1-P(\ee) \notag \\
		&\geq 1-2a\exp\Big( -\frac{b}{pk^{p-1}} \frac{\left(\frac{r\mu}{R^n}\right)^2}{12rR^{-n}+\frac{r\mu}{R^n}} \Big) \notag \\
		P(\ee^c)&\geq 1-2a\exp\left( -\frac{b}{pk^{p-1}R^n} \frac{r\mu^2}{12+\mu} \right).
		\label{eqn:minprob}
		\end{align}
		This completes the proof.
	\end{proof}
	
	\begin{rem}\mbox{~}
	
		\begin{enumerate}
			\item From \eqref{eqn:minprob} one can make the probability close to 1 by taking a sufficiently large sample size.
			
			\item The sampling inequality \eqref{eqn:mainsaminq} is true for sufficiently large $\lambda=\frac{r\mu}{R^n}$ such that $(c_1\phi-c_2)\geq \frac{2^{\frac{1}{n+2}}(n+2)}{(n+1)\log{2}}$, i.e. $$r\geq \frac{pk^{p-1}R^n(12+\mu)}{c_1{\mu}^2}\Big[ \frac{2^{\frac{1}{n+2}}(n+2)}{(n+1)\log{2}}+c_2 \Big]=\oo(R^{2n}).$$
		\end{enumerate}
	\end{rem}

	\section*{Acknowledgement}
	The first author acknowledges Council of Scientific \& Industrial Research for the financial support.
	\bibliographystyle{acm}
	\bibliography{ransamreffinal}

\begin{thebibliography}{10}

\bibitem{aldroubi2001nonuniform}
{\sc Aldroubi, A., and Gr{\"o}chenig, K.}
\newblock Nonuniform sampling and reconstruction in shift-invariant spaces.
\newblock {\em SIAM review 43}, 4 (2001), 585--620.

\bibitem{bass2005random}
{\sc Bass, R.~F., and Gr{\"o}chenig, K.}
\newblock Random sampling of multivariate trigonometric polynomials.
\newblock {\em SIAM journal on mathematical analysis 36}, 3 (2005), 773--795.

\bibitem{bass2010random}
{\sc Bass, R.~F., and Gr{\"o}chenig, K.}
\newblock Random sampling of bandlimited functions.
\newblock {\em Israel Journal of Mathematics 177}, 1 (2010), 1--28.

\bibitem{bass2013relevant}
{\sc Bass, R.~F., and Gr{\"o}chenig, K.}
\newblock Relevant sampling of band-limited functions.
\newblock {\em Illinois Journal of Mathematics 57}, 1 (2013), 43--58.

\bibitem{bennett1962probability}
{\sc Bennett, G.}
\newblock Probability inequalities for the sum of independent random variables.
\newblock {\em Journal of the American Statistical Association 57}, 297 (1962),
  33--45.

\bibitem{butzer1992sampling}
{\sc Butzer, P.~L., and Stens, R.~L.}
\newblock Sampling theory for not necessarily band-limited functions: a
  historical overview.
\newblock {\em SIAM review 34}, 1 (1992), 40--53.

\bibitem{candes2006robust}
{\sc Cand{\`e}s, E.~J., Romberg, J., and Tao, T.}
\newblock Robust uncertainty principles: Exact signal reconstruction from
  highly incomplete frequency information.
\newblock {\em IEEE Transactions on information theory 52}, 2 (2006), 489--509.

\bibitem{candes2006stable}
{\sc Cand{\`e}s, E.~J., Romberg, J.~K., and Tao, T.}
\newblock Stable signal recovery from incomplete and inaccurate measurements.
\newblock {\em Communications on Pure and Applied Mathematics: A Journal Issued
  by the Courant Institute of Mathematical Sciences 59}, 8 (2006), 1207--1223.

\bibitem{chan2014monte}
{\sc Chan, S.~H., Zickler, T., and Lu, Y.~M.}
\newblock Monte carlo non-local means: Random sampling for large-scale image
  filtering.
\newblock {\em IEEE transactions on image processing 23}, 8 (2014), 3711--3725.

\bibitem{cucker2002mathematical}
{\sc Cucker, F., and Smale, S.}
\newblock On the mathematical foundations of learning.
\newblock {\em Bulletin of the American mathematical society 39}, 1 (2002),
  1--49.

\bibitem{cucker2007learning}
{\sc Cucker, F., and Zhou, D.~X.}
\newblock {\em Learning theory: an approximation theory viewpoint}, vol.~24.
\newblock Cambridge University Press, 2007.

\bibitem{eldar2009compressed}
{\sc Eldar, Y.}
\newblock Compressed sensing of analog signals in shift-invariant spaces.
\newblock {\em IEEE Transactions on Signal Processing 8}, 57 (2009),
  2986--2997.

\bibitem{fuhr2019relevant}
{\sc F{\"u}hr, H., and Xian, J.}
\newblock Relevant sampling in finitely generated shift-invariant spaces.
\newblock {\em Journal of Approximation Theory 240\/} (2019), 1--15.

\bibitem{sun2010sampling}
{\sc Nashed, M.~Z., and Sun, Q.}
\newblock Sampling and reconstruction of signals in a reproducing kernel
  subspace of {$L^p(\mathbb{R}^d)$}.
\newblock {\em Journal of Functional Analysis 258}, 7 (2010), 2422--2452.

\bibitem{olevskii2016functions}
{\sc Olevskii, A.~M., and Ulanovskii, A.}
\newblock {\em Functions with disconnected spectrum}, vol.~65.
\newblock American Mathematical Soc., 2016.

\bibitem{poggio2003mathematics}
{\sc Poggio, T., and Smale, S.}
\newblock The mathematics of learning: Dealing with data.
\newblock {\em Notices of the AMS 50}, 5 (2003), 537--544.

\bibitem{smale2004shannon}
{\sc Smale, S., and Zhou, D.-X.}
\newblock Shannon sampling and function reconstruction from point values.
\newblock {\em Bulletin of the American Mathematical Society 41}, 3 (2004),
  279--305.

\bibitem{yang2019random}
{\sc Yang, J.}
\newblock Random sampling and reconstruction in multiply generated
  shift-invariant spaces.
\newblock {\em Analysis and Applications 17}, 02 (2019), 323--347.

\bibitem{tao2019random}
{\sc Yang, J., and Tao, X.}
\newblock Random sampling and approximation of signals with bounded
  derivatives.
\newblock {\em Journal of Inequalities and Applications 2019}, 1 (2019), 107.

\bibitem{yang2013random}
{\sc Yang, J., and Wei, W.}
\newblock Random sampling in shift invariant spaces.
\newblock {\em Journal of Mathematical Analysis and Applications 398}, 1
  (2013), 26--34.

\end{thebibliography}
\end{document}